\pgfplotsset{compat=1.14} 
\newtheorem{theorem}{Theorem}[section]
\newtheorem{lemma}[theorem]{Lemma}
\newtheorem{proposition}[theorem]{Proposition}
\theoremstyle{definition}
\newtheorem{definition}[theorem]{Definition}
\theoremstyle{remark}
\newtheorem{remark}[theorem]{Remark}
\numberwithin{equation}{section}
\newcommand{\inn}{~ \hat{\in}~ }
\begin{document}

\title{Ulam numbers have zero density}

\author{Theophilus Agama}
\address{Department of Mathematics, African Institute for Mathematical Sciences, Ghana}
\email{theophilus@aims.edu.gh/emperordagama@yahoo.com}


\subjclass[2000]{Primary 54C40, 14E20; Secondary 46E25, 20C20}

\date{\today}


\keywords{Ulam numbers; addition chains; determiners; regulators; density}

\begin{abstract}
In this paper, we show that the natural density $\mathcal{D}[(U_m)]$ of Ulam numbers $(U_m)$ satisfies $\mathcal{D}[(U_m)]=0$. That is, we show that for $(U_m)\subset [1,k]$
\begin{align}
\lim \limits_{k\longrightarrow \infty}\frac{\left |(U_m)\cap [1,k]\right |}{k}=0.\nonumber
\end{align}
\end{abstract}

\maketitle

\section{Introduction}

The sequence commonly called the Ulam sequence was introduced in the foundational work of \cite{ulam1964combinatorial} and has attracted sustained computational and theoretical attention (see, in particular, the early experimental remarks of \cite{recaman1973questions}). Denoting the sequence by $(U_m)_{m\ge1}$, it begins
$$
1,2,3,4,6,\dots,
$$
and is defined recursively by selecting at each stage the smallest integer that admits a \emph{unique} representation as the sum of two distinct earlier terms. The central asymptotic question about this sequence, commonly referred to as the \emph{Ulam density problem}, asks whether the set of Ulam numbers has positive natural density in the integers or is thin in the sense of having density zero. Computational evidence and heuristic arguments in the literature have long suggested sparsity, but a rigorous global asymptotic resolution has been lacking.\\

In this paper, we prove that the Ulam sequence has zero natural density. The proof is elementary in tools but novel in synthesis: it combines classical results on addition chains with a compact combinatorial device that we introduce here, the \emph{circle of partition} (CoP). The addition-chain component relies on standard constructive accounts and rigorous lower bounds for the shortest chains. The following provide the quantitative chain-length estimates that are central to our argument. \cite{knuth1969art,schonhage1975lower,brauer1939addition}\\

The proof strategy has two complementary components. The first component embeds finite prefixes of the Ulam sequence in suitably chosen addition chains producing the largest element of the prefix (Proposition~\ref{embedding}). Writing the length of any producing chain as the ratio of the target to an average regulator parameter $\mathcal C(n)$ (Propositions~\ref{king} and \ref{decider}) allows the translation of combinatorial coverage by a chain into a quantitative constraint on $\mathcal C(n)$. By invoking classical lower bounds for the shortest addition chains (Lemma~\ref{lower bound}), we obtain a nontrivial lower bound for $\mathcal C(n)$ (Proposition~\ref{decider 2}). This lower bound forces an upper bound on the proportion of Ulam numbers that can occur in initial intervals containing the embedded prefix; letting the prefix grow yields the vanishing of the global natural density.\\

The second component uses the CoP formalism to count additive representations in a structured way. For a fixed integer $m$, the CoP encodes unordered pairs $(x,y)$ with $x+y=m$ as axes of a combinatorial circle; this viewpoint makes it straightforward to decompose contributions to representation counts according to whether both summands are Ulam, exactly one is Ulam, or neither is Ulam. Under a mild, verifiable counting hypothesis about representations involving non-Ulam summands (made explicit in Section~\ref{sec:cop}), the CoP estimates deliver the same zero-density conclusion by purely combinatorial means and clarify what structural features would be necessary to sustain a positive-density alternative.\\

\subsection{Organization of the paper.} 
Section~\ref{sec:addition} fixes the notation for the addition chains, regulators, and determiners and records the basic identity that links the length of the chain to the aggregate mass of the regulator. Here, we collect the complexity bounds of the addition-chain (including Lemma~\ref{lower bound}) and prove the estimates that relate the scale $\mathcal C(n)$ to the classical bounds (Propositions~\ref{decider} and \ref{decider 2}). Section~\ref{sec:ulam} recalls the elementary properties of the Ulam sequence (including Lemmas~\ref{infinite} and \ref{inequality1}) and establishes the embedding Proposition~\ref{embedding}. Section~\ref{sec:density} contains the principal density argument and the proof that $\mathcal D[(U_m)]=0$. Section~\ref{sec:cop} develops the Circle-of-Partition machinery and presents the alternate combinatorial proof.

\section{Overview and structure of the paper}

In this section, we provide a summary of some of the ingredients used to establish the main results of the paper. We outline the steps chronologically as follows.\\

\begin{enumerate}
    \item [(1)] We recall the notion of an addition chain that produces a given number and their corresponding regulators and determiners.
    \bigskip
    
    \item [(2)] We recall an inequality for the length of an addition chain bounded by an expression involving the least and the worst regulators of the chain.
    \bigskip
    
    \item [(3)] We review the concept of Ulam numbers and prove the infinitude of those numbers. That is, we show that these numbers increase without bound using a certain well-known construction. Additionally, we prove that the gap between any consecutive Ulam numbers can be arbitrarily large.
    \bigskip
    
    \item [(4)] We show that we can embed any finite sequence of Ulam numbers into a certain addition chain.
    \bigskip
    
    \item [(5)] Applying the a piori inequality, we get control on the cardinality of the covered finite Ulam numbers by the length of the chain, which in turn can be controlled above by the gain of the contest between the unit left translate over the worst Ulam number in the sequence of the least scale of the regulators and below by the same gain of the unit left translate of the worst number in the sequence over the worst regulator of the chain.
    \bigskip
    
    \item [(6)] The previous step allows us to write the length of this addition chain producing the largest Ulam number as the gain over the contest between the unit left translate of the largest Ulam number in the finite sequence over a certain function depending on the index of the worst Ulam number in the chain. 
    \bigskip
    
    \item [(7)] We produce the localized natural density function of the Ulam numbers and take limits on both sides of the resulting inequality. Here, it remains to investigate the behaviour of the function majorizing the density function. The result of the previous steps allows us to take this function arbitrarily small, thereby squeezing the density of the Ulam numbers
\end{enumerate}
\bigskip

\section{The notion of an addition chains}\label{sec:addition}

In this section, we recall the notion of an addition chain and the notion of the regulators and associated determiners and prove an inequality introduced earlier by the author.

\begin{definition}
Let $n\geq 3$, then by the addition chain of length $k-1$ producing $n$, we mean the finite sequence 
\begin{align}
1,2,\ldots, s_{k-1},s_k=n\nonumber
\end{align}
where each term $s_j$~($j\geq 3$) in the sequence is the sum of two previous terms, with the corresponding sequence of partition
\begin{align}
2=1+1,\ldots,s_{k-1}=a_{k-1}+r_{k-1},s_k=a_k+r_k=n\nonumber
\end{align}
with $a_{i+1}=a_i+r_i$ for $2\leq i \leq k$, where $a_i=s_{i-1}$. We call the partition the $i^{th}$ generator of the chain. We call the terms $r_i$ and $a_i$ the \emph{regulator} (gap) and the \emph{determiner} of the $i^{th}$ generator of the chain. We call the sequence $(a_i)$ and $(r_i)$ the determiners and regulators of the addition chain for $2\leq i\leq k$.  
\end{definition}

\footnote{Visionary Ulam conjectured absolutely right; Ulam numbers are very special but can be covered.
\par
}%

\begin{proposition}\label{king}
Let $1,2,\ldots,s_{k-1},s_k=n$ be any addition chain that produces $n$ with $n\geq 3$. We have
\begin{align}
\sum \limits_{j=2}^{k}r_j=n-1.\nonumber
\end{align}
\end{proposition}

\begin{proof}
First, we observe that $r_k=n-a_k$. It follows that 
\begin{align}
r_k+r_{k-1}&=n-a_k+r_{k-1}\nonumber \\&=n-(a_{k-1}+r_{k-1})+r_{k-1}\nonumber \\&=n-a_{k-1}.\nonumber
\end{align}Again, we obtain 
\begin{align}
r_k+r_{k-1}+r_{k-2}&=n-a_{k-1}+r_{k-2}\nonumber \\&=n-(a_{k-2}+r_{k-2})+r_{k-2}\nonumber \\&=n-a_{k-2}.\nonumber
\end{align}
Iterating downward in this manner and noting that $a_2=1$ establishes the identity.
\end{proof}
\bigskip

Here, we write an expression for the length of any addition chain that incorporates the target and a certain implicit function locally bounded by the worst and the least scale of the regulators of the chain. It is a consequence of the following inequality.

\begin{proposition}\label{decider}
Let $1,2,\ldots s_{k-1},s_k=n$ be any addition chain that produces $n\geq 3$ with associated generators
\begin{align}
2=1+1,\ldots,s_{k-1}=a_{k-1}+r_{k-1},s_k=a_k+r_k=n.\nonumber
\end{align}
Denote the length of the chain by $\delta(n)$. There exist some $\mathcal{C}:=\mathcal{C}(n)>0$ with 
$$
\mathrm{inf}(r_i)_{i=2}^{\delta(n)+1}\leq \mathcal{C}:=\mathcal{C}(n)\leq \mathrm{sup}(r_i) _{i=2}^{\delta(n)+1}
$$ 
such that 
\begin{align}
\delta(n)=\frac{n-1}{\mathcal{C}}.\nonumber
\end{align}
\end{proposition}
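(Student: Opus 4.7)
My plan is to read the claim as a disguised averaging statement and to apply Theorem \ref{king} directly.

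First I would count regulators. The chain $1,2,\ldots,s_{k-1},s_k=n$ has length $k-1$, and its regulators are $r_2,r_3,\ldots,r_k$. Writing the length as $\delta(n)=k-1$, there are exactly $\delta(n)$ regulators, namely $r_2,\ldots,r_{\delta(n)+1}$. By Theorem \ref{king}, these regulators sum to $n-1$:
\begin{equation*}
\sum_{j=2}^{\delta(n)+1} r_j \;=\; n-1.
\end{equation*}

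Next I would compute the arithmetic mean of the regulators. Dividing by the number $\delta(n)$ of summands yields
\begin{equation*}
\frac{1}{\delta(n)}\sum_{j=2}^{\delta(n)+1} r_j \;=\; \frac{n-1}{\delta(n)}.
\end{equation*}
Since the arithmetic mean of any finite real sequence lies between its infimum and its supremum, the value
\begin{equation*}
\mathcal{C}:=\mathcal{C}(n)=\frac{n-1}{\delta(n)}
\end{equation*}
automatically satisfies $\mathrm{Inf}(r_i)_{i=2}^{\delta(n)+1}\leq \mathcal{C}\leq \mathrm{sup}(r_i)_{i=2}^{\delta(n)+1}$. Rearranging the definition of $\mathcal{C}$ produces the advertised identity $\delta(n)=(n-1)/\mathcal{C}$.

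There is no real obstacle here; the only subtle point is bookkeeping on the index range of the regulators so that the count matches $\delta(n)$ exactly. Once that is checked, the statement is an immediate consequence of Theorem \ref{king} combined with the elementary fact that an arithmetic mean is sandwiched between the extreme values of its summands. In other words, the proposition is essentially a reformulation of the regulator-sum identity in which the chain length is exhibited as the ratio of $n-1$ to an intermediate value of the regulators.
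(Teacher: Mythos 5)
Your proposal is correct and is essentially the paper's own argument: both rest on Theorem \ref{king} together with the count of exactly $\delta(n)$ regulators, and both identify $\mathcal{C}$ as $(n-1)/\delta(n)$, sandwiched between $\mathrm{Inf}(r_i)$ and $\mathrm{sup}(r_i)$. Your phrasing via the arithmetic mean is a slightly cleaner way of stating what the paper derives as a two-sided inequality before concluding.
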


\begin{proof}
By denoting the length of the addition that produces $n$ by $\delta(n)$ and using the identity in Proposition \ref{king}, we obtain the inequality 
\begin{align}
\frac{n-1}{\mathrm{sup}(r_i)_{i=2}^{\delta(n)+1}}\leq \delta(n)\leq \frac{n-1}{\mathrm{inf}(r_i)_{i=2}^{\delta(n)+1}}\nonumber
\end{align}
by noting that the regulators $(r_i)$ in the chain with multiplicity count as the length of the chain producing $n$. The result follows immediately from the above inequality.
\end{proof}
\bigskip

\begin{lemma}\label{lower bound}
Let $\ell(n)$ denote the length of the shortest addition chain producing $n$. We have
\begin{align}
\log_2(n)+\log_2(\nu(n))-2.13\leq \ell(n)\leq \log_2(n)+\frac{(1+o(1))\log_2(n)}{\log_2(\log_2(n))}\nonumber
\end{align}
where $\nu(n)$ is the hamming weight - the number of ones of the binary expansion of $n$. 
\end{lemma}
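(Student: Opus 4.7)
The plan is to prove the two inequalities separately, as the upper and lower bounds draw on very different techniques from the classical theory of addition chains: the upper bound is a form of the Brauer--Erd\H{o}s $m$-ary method, while the lower bound is Sch\"onhage's small-step/large-step estimate.

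For the upper bound I would use the $2^k$-ary method with $k$ chosen nearly optimally. First, I build the initial segment $1, 2, 3, \ldots, 2^k - 1$ sequentially, which costs at most $2^k$ additions. Then, writing $n$ in base $2^k$ as $n = \sum_{j=0}^{t-1} d_j 2^{kj}$ with $t = \lceil \log_2(n+1)/k \rceil$ and $0 \le d_j < 2^k$, I perform a Horner-style evaluation: starting from the top digit $d_{t-1}$, I alternate $k$ consecutive doublings with the addition of one precomputed digit $d_j$. The chain produced has length at most $2^k + (k+1)(t-1)$. Choosing $k \approx \log_2\log_2 n - 2\log_2\log_2\log_2 n$ forces the $2^k$ term to become $o(\log_2 n / \log_2\log_2 n)$ and yields the claimed bound $\log_2(n) + (1+o(1))\log_2(n)/\log_2\log_2(n)$.

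For the lower bound I would follow Sch\"onhage's strategy. Given an optimal chain $1 = a_0 < a_1 < \cdots < a_r = n$ with $r = \iota(n)$, I call step $i$ a \emph{large step} when $\lfloor \log_2 a_i \rfloor > \lfloor \log_2 a_{i-1} \rfloor$ and a \emph{small step} otherwise. Since $a_0 = 1$ and $a_r = n$, the number of large steps is exactly $\lfloor \log_2 n \rfloor$, so letting $s$ denote the number of small steps one has $\iota(n) = \lfloor \log_2 n \rfloor + s$. The core step is the combinatorial inequality $\nu(n) \le 2^{s+c}$ for a suitable absolute constant $c$, which is proved by tracking the evolution of the binary representations along the chain: a large step $a_i = a_j + a_\ell$ with $a_i$ at least doubling $a_{i-1}$ forces one of $a_j, a_\ell$ to be close to $a_{i-1}$, severely limiting the growth of the Hamming weight, while only small steps can efficiently introduce new one-bits. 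Taking $\log_2$ of the resulting inequality produces $s \ge \log_2 \nu(n) - c$, and Sch\"onhage's sharp analysis pins down $c = 2.13$.

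The main obstacle is the sharp constant in the lower bound. Obtaining a crude estimate of the form $\iota(n) \ge \log_2 n + \log_2 \nu(n) - O(1)$ through Kraft-style counting on the binary tree associated to the chain is relatively painless, but establishing the explicit numerical value $2.13$ requires a delicate weighted accounting of the contribution of each small step and a careful optimization in the binary-tree unfolding of the chain; this is where I expect the bulk of the technical effort to lie.
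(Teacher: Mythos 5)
The paper does not actually prove this lemma: its entire ``proof'' is a citation to Sch\"onhage's 1975 paper, so there is no argument in the text to compare yours against. What you have written is, in effect, an outline of the proofs in the classical literature that the citation points to, and on that level your sketch is pointed in the right direction: the upper bound is indeed Brauer's $2^k$-ary method, and your accounting ($2^k-2$ additions for the precomputed digits, then $(k+1)(t-1)$ for the Horner evaluation, with $k\approx \log_2\log_2 n-2\log_2\log_2\log_2 n$ making the $2^k$ term $o(\log_2 n/\log_2\log_2 n)$) is correct. You also correctly note that the two bounds come from different sources; the paper's single citation to Sch\"onhage in fact only covers the lower bound, so your identification of the Brauer--Erd\H{o}s method is supplying something the paper's reference does not.

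The gap is in the lower bound, and you have located it yourself but not closed it. The decomposition $\iota(n)=\lfloor\log_2 n\rfloor+s$ into large and small steps is fine (each step at most doubles, so $\lfloor\log_2 a_i\rfloor$ increases by at most one per step). But the inequality $\nu(n)\le 2^{s+c}$ is the entire theorem, and the heuristic you offer for it --- that a large step forces one summand to be close to $a_{i-1}$ and hence cannot increase the Hamming weight much --- does not survive scrutiny as stated: a large step $a_i=a_j+a_\ell$ need not be a doubling (e.g.\ $8=5+3$ in the chain $1,2,3,5,8$), and a non-doubling large step can genuinely increase $\nu$. The naive bound $\nu(a_j+a_\ell)\le\nu(a_j)+\nu(a_\ell)$ controls $\nu(n)$ by $2$ to the number of \emph{non-doubling} steps, which is not the number of small steps, and bridging that mismatch is exactly where Sch\"onhage's delicate potential-function argument and the constant $2.13$ come from. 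So your proposal is an honest and correctly structured roadmap, but the decisive combinatorial lemma is asserted rather than proved. One mitigating observation: in this paper only the \emph{upper} bound of the lemma is ever used (in the proof of Proposition \ref{decider 2} and hence the main theorem), so for the purposes of the paper the part of your sketch that is essentially complete is the part that matters.
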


\begin{proof}
The proof of the lower bound can be found in \cite{schonhage1975lower} while the upper bound follows from the paper by Alfred Brauer \cite{brauer1939addition}.
\end{proof}

\begin{remark}
Although finding a precise value for the implicit constant in Proposition \ref{decider} is by no means easy, we can obtain a lower bound for the purposes of our work.
\end{remark}

\begin{proposition}\label{decider 2}
Let $1,2,\ldots s_{k-1},s_k=n$ be any addition chain that produces $n\geq 3$ with associated generators
\begin{align}
2=1+1,\ldots,s_{k-1}=a_{k-1}+r_{k-1},s_k=a_k+r_k=n.\nonumber
\end{align}
Denote the length of the addition chain that produces $n$ by $\delta(n)$. There exist some $\mathcal{C}:=\mathcal{C}(n)>0$ with $$
\mathrm{inf}(r_i)_{i=2}^{\delta(n)+1}\leq \mathcal{C}:=\mathcal{C}(n)\leq \mathrm{sup}(r_i) _{i=2}^{\delta(n)+1}
$$ 
such that 
\begin{align}
\delta(n)=\frac{n-1}{\mathcal{C}}\nonumber
\end{align}
where 
\begin{align}
\mathcal{C}\gg \frac{n}{\log_2(n)}.\nonumber
\end{align}
\end{proposition}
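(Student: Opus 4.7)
The plan is to combine the identity $\delta(n) = (n-1)/\mathcal{C}$ furnished by Proposition \ref{decider} with the upper bound on the shortest addition chain length recorded in Lemma \ref{lower bound}. Rewriting the identity as $\mathcal{C} = (n-1)/\delta(n)$ makes it transparent that a lower bound on $\mathcal{C}$ is equivalent to an upper bound on the length of the chain producing $n$, so the entire argument will reduce to choosing a suitably short chain and substituting.

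More concretely, I would specialize to a \emph{shortest} addition chain producing $n$, so that $\delta(n)$ can be identified with $\iota(n)$. Applying Proposition \ref{decider} to this chain yields a constant $\mathcal{C} = (n-1)/\iota(n)$ possessing the required sandwich property between $\mathrm{Inf}(r_i)_{i=2}^{\iota(n)+1}$ and $\mathrm{sup}(r_i)_{i=2}^{\iota(n)+1}$, and Lemma \ref{lower bound} then supplies the upper bound
\[
\iota(n) \leq \log_2(n) + \frac{(1+o(1))\log_2(n)}{\log_2(\log_2(n))}.
\]
Substituting this estimate into the denominator of $(n-1)/\iota(n)$ is a one-line manipulation that yields exactly the claimed lower bound for $\mathcal{C}$.

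The main conceptual obstacle is that Proposition \ref{decider 2} is phrased as if it pertains to an arbitrary addition chain of length $\delta(n)$, yet the lower bound for $\mathcal{C}$ can only be guaranteed when $\delta(n)$ itself is controlled by the Sch\"onhage bound; this forces the chain in question to be (at worst) a shortest one, since for longer chains $\delta(n)$ is larger and $\mathcal{C} = (n-1)/\delta(n)$ correspondingly smaller. Once this identification is made, the remainder of the proof is purely algebraic, and no analytic input beyond Lemma \ref{lower bound} is required. A minor technical remark is that the $\log_2\log_2$ factor forces the estimate to be interpreted asymptotically, i.e. for $n$ sufficiently large.
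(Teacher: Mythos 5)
Your route is the same as the paper's: take the identity $\delta(n)=(n-1)/\mathcal{C}$ from Proposition \ref{decider}, specialize to a shortest chain so that $\delta(n)=\iota(n)$, and substitute the Sch\"onhage upper bound of Lemma \ref{lower bound} into $\mathcal{C}=(n-1)/\iota(n)$. However, the ``conceptual obstacle'' you flag is not a removable technicality but a genuine gap, and it is present in the paper's proof as well: the proposition asserts the lower bound on $\mathcal{C}$ for an \emph{arbitrary} addition chain producing $n$, yet $\mathcal{C}=(n-1)/\delta(n)$ decreases as the chain lengthens, so the bound can only hold for chains whose length is at most the Sch\"onhage estimate, i.e.\ for (near-)shortest chains. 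The paper's own proof effectively concedes this by remarking that for a non-shortest chain the implicit constant is ``much lower'' than claimed --- a remark that contradicts the statement rather than proving it. The gap matters downstream: in the proof of the main theorem the proposition is invoked for the chain of Proposition \ref{embedding}, which must contain all $n$ Ulam numbers up to $U_n$ and therefore has length $\delta(U_n)\geq n-1$, giving $\mathcal{C}\leq (U_n-1)/(n-1)$; this is compatible with the claimed lower bound $\mathcal{C}\gg (U_n-1)/\log_2(U_n)$ only if $n-1\ll \log_2(U_n)$, i.e.\ only if the Ulam numbers grow exponentially --- which is precisely the kind of sparseness the paper is trying to prove (and which is believed to be false). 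So your proof, like the paper's, establishes the inequality only under the additional hypothesis that the chain is shortest, and neither argument proves the proposition as stated.
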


\begin{proof}
The first part of the result has already been proven in Proposition \ref{decider}. In particular, we can write 
$$
\mathcal{C}=\frac{n-1}{\delta(n)}
$$ 
where $\delta(n)$ runs over all the addition chains that produce $n$. We deduce
$$
\mathcal{C}\geq \mathrm{inf}(\mathcal{C})\gg \frac{n}{\log_2(n)}
$$ 
using the upper bound in Lemma \ref{lower bound}.
\end{proof}
\bigskip

\section{The notion of Ulam numbers}\label{sec:ulam}

In this section, we review the concept of Ulam numbers and review some of its properties. We recall the well-known construction that confirms the infinitude of these numbers. First, we recall the following definitions.

\begin{definition}
Ulam numbers are a sequence of distinct numbers of the form $1,2,3,4,6,\ldots,U_i,U_{i+1},\ldots$, where each term in the sequence is distinct and has the unique representation $U_i=U_j+U_k$ for $i-1\geq j>k$ and $U_i$ is the smallest such numbers.
\end{definition}
\bigskip

 The following construction is well-known and standard, yet we have decided to reproduce it here \cite{recaman1973questions}.

\begin{lemma}\label{infinite}
There are infinitely many Ulam numbers $(U_m)_{m\geq 1}$.
\end{lemma}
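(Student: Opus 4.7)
The plan is to establish the infinitude of $(U_m)$ by induction: given a finite initial segment $U_1<U_2<\cdots<U_n$ of Ulam numbers, I would explicitly produce the next term $U_{n+1}$, showing that the construction never terminates. The base cases $U_1=1,U_2=2,U_3=3,U_4=4,U_5=6$ can be checked directly, so suppose $n\geq 3$ and $U_1,\ldots,U_n$ are already known.

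The key step is to exhibit \emph{some} integer strictly greater than $U_n$ that is uniquely representable as a sum of two distinct terms from $\{U_1,\ldots,U_n\}$. The natural candidate is $S:=U_{n-1}+U_n$. I would argue that $S$ has the unique representation $U_{n-1}+U_n$ among pairs $(U_i,U_j)$ with $i<j\leq n$ by the following case analysis: if $j=n$ then $U_i=U_{n-1}$ forces $i=n-1$; if $j=n-1$ then $U_i=U_n>U_{n-1}>U_i$, impossible; and if $j\leq n-2$ then $U_i+U_j\leq U_{n-3}+U_{n-2}<U_{n-1}+U_n=S$, so no such pair reproduces $S$. Hence $S$ sits in the set
\begin{align*}
\mathcal{S}_n:=\bigl\{\,U_i+U_j:1\leq i<j\leq n,\ U_i+U_j>U_n,\ U_i+U_j\text{ has a unique such representation}\,\bigr\},
\end{align*}
which is therefore nonempty and, being a subset of the finite set of pairwise sums, admits a minimum.

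I would then define $U_{n+1}:=\min\mathcal{S}_n$. By construction $U_{n+1}>U_n$, it is uniquely expressible as a sum of two distinct elements of $\{U_1,\ldots,U_n\}$, and it is the smallest integer with that property exceeding $U_n$; this is precisely the defining recurrence for the next Ulam number. Induction then yields an infinite strictly increasing sequence, establishing the lemma.

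The only subtle point is the uniqueness argument for $S=U_{n-1}+U_n$, but this is the short case check above; no real obstacle is anticipated. (An alternative, slicker phrasing would simply observe that the set of all integers that are uniquely representable sums of two distinct prior Ulam numbers is by construction nonempty at every stage, since the greedy procedure defining $(U_m)$ can always be iterated once $U_{n-1}+U_n$ is recognized as a valid candidate.)
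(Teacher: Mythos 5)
Your proposal is correct and follows essentially the same route as the paper: both arguments observe that $U_{n-1}+U_n$ is always uniquely representable as a sum of two distinct earlier terms, so the greedy construction never stalls. In fact your version is tighter than the paper's, since you actually carry out the case analysis proving that uniqueness, which the paper merely asserts.
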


\begin{proof}
Suppose that the first $n$ Ulam numbers have already been determined, namely $1,2,3,4,\ldots,U_{n-1},U_{n}$. The representation $U_n+U_{n-1}$ is unique and the number so represented in this form could be the next Ulam number. If not, then this number is not the smallest such number. Because there are other numbers with such unique representations, we choose the smallest among them larger than $U_n$ and assign to $U_{n+1}$ the next Ulam number. This construction can be repeated indefinitely, thereby generating an infinite sequence of Ulam numbers. This completes the proof.
\end{proof}

\begin{lemma}\label{inequality1}
No Ulam number $U_m$ for $m>3$ can be the sum of it's prior consecutive Ulam numbers.
\end{lemma}

\begin{proof}
Suppose, for the sake of contradiction, that $U_{n-1}+U_n=U_{n+1}$. Hence, the representation $U_n+U_{n-2}$ must be unique. Suppose that it is not unique, then there exist some $U_i<U_{n-2}$ and $U_j>U_{n}$ such that 
\begin{align}
U_n+U_{n-2}&=U_i+U_j\nonumber \\&>U_{n+1}\nonumber \\&=U_n+U_{n-1}.\nonumber
\end{align}
This implies $U_{n-2}>U_{n-1}$, which cannot hold. Now, we observe that 
\begin{align}
U_n\leq U_n+U_{n-2}<U_{n+1} \nonumber
\end{align}
contradicting the fact that $U_{n+1}$ is the next Ulam number. 
\end{proof}
\bigskip

 Now, we show that we can embed any finite sequence of Ulam numbers $(U_n)$ into a certain addition chain by carefully choosing the regulators $(r_i)$ of the chain.

\begin{proposition}\label{embedding}
Let $(U_m)_{m=1}^{n}$ be a finite sequence of Ulam numbers. There exists an addition chain $(s_k)$ that produces $U_n$ such that 
\begin{align}
(U_m)_{m=1}^{n}\subseteq (s_k).\nonumber
\end{align} 
\end{proposition}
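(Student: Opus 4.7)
My plan is to take the Ulam numbers themselves as the chain. Define
\[
s_1 := U_1 = 1, \qquad s_2 := U_2 = 2, \qquad s_i := U_i \text{ for } 3 \leq i \leq n.
\]
With this choice the inclusion $(U_m)_{m=1}^n \subseteq (s_k)$ is built in — it is literally equality along the initial segment — so the entire content of the proposition reduces to verifying that $(s_i)_{i=1}^n$ is a legitimate addition chain producing $U_n$ in the sense of Definition 3.1.

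The verification invokes the defining property of Ulam numbers in the most direct way possible. The starting terms $s_1 = 1$ and $s_2 = 2$ match the obligatory start of any addition chain, and the generator $2 = 1 + 1$ coincides with $a_2 + r_2$. For each $i \geq 3$, the Ulam defining identity supplies indices $i - 1 \geq j > k \geq 1$ with $U_i = U_j + U_k$; since $j, k < i$, the chain entries $s_j = U_j$ and $s_k = U_k$ are strictly earlier terms, and $s_i = s_j + s_k$ exhibits $s_i$ as a sum of two earlier entries. Setting $a_i := s_{i-1}$ and $r_i := s_i - s_{i-1}$ then matches the notational conventions of the definition and gives a chain of length $n - 1$ terminating in $s_n = U_n$.

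The only friction I anticipate is the interpretation of Definition 3.1. If one reads it strictly as a star-chain condition — demanding that the regulator $r_i$ itself lie in $\{s_1, \ldots, s_{i-1}\}$ — then the plain identification $s_i = U_i$ can fail whenever the consecutive Ulam gap $U_i - U_{i-1}$ is not a previous entry of the chain (for instance $U_{15} - U_{14} = 9$ is not a Ulam number). In that case I would argue inductively: having built a chain through $U_{i-1}$, I lengthen it by inserting a few auxiliary terms, such as repeated translates by $1$ or by a small previously-listed regulator, in the appropriate monotone position, so that $U_i$ is reachable as $s_{i-1}+r$ for some legitimate previous term $r$. This merely inflates the ambient chain with auxiliary entries while preserving $(U_m)_{m=1}^n$ as a subsequence, so the embedding conclusion of Proposition \ref{embedding} is not weakened. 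Under the looser reading — in which $a_i = s_{i-1}$ is only a bookkeeping device and $r_i$ is just the difference used in the telescoping identity of Theorem \ref{king} — no such inflation is needed and the proposition follows immediately from the Ulam defining identity.
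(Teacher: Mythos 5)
Your proposal is correct, and your primary route is genuinely different from the paper's. The paper does not take the Ulam numbers themselves as the chain; instead it interpolates between consecutive Ulam numbers, inserting auxiliary terms $U_m + r_j, U_m + r_j + r_i, \ldots$ with small regulators until $U_{m+1}$ is reached, so that every step has the form $s_i = s_{i-1} + r_i$ demanded by Definition 3.1 (in effect one may always take $r = 1$, giving a chain of length up to $U_n - 1$). Your main argument --- that the sequence $1, 2, 3, 4, 6, \ldots, U_n$ is already an addition chain because the Ulam defining relation $U_i = U_j + U_k$ with $j, k < i$ exhibits each term as a sum of two earlier terms --- is cleaner and yields a chain of length $n-1$, but it is only valid under the loose reading of Definition 3.1 in which the partition $s_i = s_{i-1} + r_i$ is mere bookkeeping; under the strict reading the regulator $U_i - U_{i-1}$ must itself be an earlier chain term, which fails (your example $U_{15} - U_{14} = 9$, and cf.\ Lemma \ref{inequality}). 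You anticipate exactly this and your fallback of padding with unit translates is precisely the paper's construction, so nothing is missing. One remark worth making: the distinction is not cosmetic for the rest of the paper, since the subsequent density argument needs the inequality $n \leq \delta(U_n) + 1$ to be far from equality (it needs $\mathcal{C}(n)$ large, i.e.\ the chain long relative to $n$); with your identity chain $s_i = U_i$ that inequality is the tautology $n \leq n$, so only the padded chain serves the paper's later purpose --- though as a proof of Proposition \ref{embedding} as literally stated, either chain suffices.
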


\begin{proof}
Let $1,2,3,4,\ldots, U_n$ be a finite sequence of Ulam numbers. For each term $U_m$ for $m\geq 1$, we choose the regulator $r_j\geq 1$ such that $U_m+r_j\leq U_{m+1}$. If it is the case that $U_m+r_j=U_{m+1}$, then the consecutive sequence $U_m,U_{m+1}$ is also a consecutive sequence in the desired addition chain. If not, then we continue this process by choosing the regulator $r_i\geq 1$ such that $U_m+r_j+r_i=U_{m+1}$. In such a case, the consecutive Ulam numbers $U_m,U_{m+1}$ are not consecutive numbers in the corresponding addition chain. This construction can be repeated to generate an addition chain that produces $U_n$ that contains the finite sequence of Ulam numbers. This completes the proof of the proposition.
\end{proof}
\bigskip

\section{Density of Ulam numbers}\label{sec:density}

In this section, we show that Ulam numbers have a zero natural density. By denoting the natural density of the Ulam numbers $(U_m)$ of the form $\mathcal{D}[(U_m)]$, we obtain the following result. 

\begin{theorem}
Let $(U_m)$ be the infinite sequence of Ulam numbers and denote by $D[(U_m)]$ their natural density. We have
\begin{align}
\mathcal{D}[(U_m)]=0.\nonumber
\end{align}
\end{theorem}

\begin{proof}
We construct the first $n$ sequence of Ulam number $1,2,3,\ldots, U_{n-2},U_{n-1},U_n$. By Proposition \ref{embedding} there exists at least one addition chain $(s_k)$ producing $U_n$ that covers the original enumerated sequence of Ulam numbers. By Proposition \ref{decider}, we deduce
\begin{align}
n&\leq \delta(U_n)+1\nonumber \\&=\frac{U_n-1}{\mathcal{C}(n)}+1.\nonumber
\end{align}
For any $l\geq U_n>n$, we have 
\begin{align}
\frac{n}{l}&\leq \frac{U_n-1}{l\mathcal{C}(n)}+\frac{1}{l}\nonumber \\&\leq \frac{1}{\mathcal{C}(n)}-\frac{1}{l\mathcal{C}(n)}+\frac{1}{l}\nonumber \\&\leq \frac{1}{\mathcal{C}(n)}+\frac{1}{l}.\nonumber
\end{align}
Taking the limits $n\longrightarrow \infty$ on both sides, we have \begin{align}
\mathcal{D}[(U_m)_{m=1}^{\infty}]&\leq \lim \limits_{n\longrightarrow \infty}\frac{1}{\mathcal{C}(n)}\nonumber \\&\ll \lim \limits_{n\longrightarrow \infty}\frac{\log_2(n)}{n}\nonumber \\&=0\nonumber
\end{align}
using Proposition \ref{decider 2}.
\end{proof}

\section{An alternate proof using the method of Circle of Partition}\label{sec:cop}

In this section, we provide an alternate solution to the Ulam density problem using a particular combinatorial structure. We introduce the notion of a circle of partition and use it to show that Ulam numbers have a zero density. This can be read with the observation that the requirements in the alternate result are easily verifiable, and the statement can be reduced to the original statements of the problem. 

\begin{definition}\label{major}
Let $n\in \mathbb{N}$ and $\mathbb{M}\subset \mathbb{N}$. We denote the Circle of Partition generated by $n$ with respect to the subset $\mathbb{M}$ by
\begin{align}
\mathcal{C}(n,\mathbb{M})=\left \{[x]\mid x,n-x\in \mathbb{M}\right \}.\nonumber
\end{align}
 In the following, we will abbreviate this structure as CoP. We call members of $\mathcal{C}(n,\mathbb{M})$ \emph{points} and denote them by $[x]$. For the special case $\mathbb{M}=\mathbb{N}$, we denote the CoP simply as $\mathcal{C}(n)$. 
\end{definition}
\bigskip

\begin{definition}\label{axis}
We denote the line $\mathbb{L}_{[x],[y]}$ that joins the point $[x]$ and $[y]$ as an axis of the CoP $\mathcal{C}(n,\mathbb{M})$ if and only if $x+y=n$. We say that the axis point $[y]$ is an axis partner of the axis point $[x]$ and vice versa. We do not distinguish between $\mathbb{L}_{[x],[y]}$ and $\mathbb{L}_{[y],[x]}$, since it is essentially the same axis. The point $[x]\in \mathcal{C}(n,\mathbb{M})$ such that $2x=n$ is the \emph{center} of the CoP. If it exists, then it is their only point which is not an axis point. The line joining any two arbitrary points that are not axes partners on the CoP will be referred to as a \emph{chord} of the CoP. The length of the chord joining the points $[x],[y]\in \mathcal{C}(n,\mathbb{M})$, denoted by $\mathcal{D}([x],[y])$, is defined by 
\begin{align}
\mathcal{D}([x],[y])=|x-y|.\nonumber
\end{align}
\end{definition}
\bigskip

\subsection{Notations}
 Let
\begin{align}
\mathbb{N}_n=\left \{m\in \mathbb{N}\mid ~m\leq n\right\}\nonumber
\end{align}
be the sequence of the first $n$ natural numbers. Furthermore, we will denote the \emph{weight} of the point $[x]$ by
\begin{align}
\Vert[x]\Vert:=x.\nonumber
\end{align}
Similarly, we denote the weight set of points in CoP $\mathcal{C}(n,\mathbb{M})$ by $||\mathcal{C}(n,\mathbb{M})||$. We denote the assignment of an axis $\mathbb{L}_{[x],[y]}$ to a CoP $\mathcal{C}(n,\mathbb{M})$ as
$$
\mathbb{L}_{[x],[y]}\inn\mathcal{C}(n,\mathbb{M})
$$
which means
$$
[x],[y] \in \mathcal{C}(n,\mathbb{M}) \mbox{ and } x+y=n
$$
and the number of axes of a CoP as
$$
\nu(n,\mathbb{M}):=\#\lbrace\mathbb{L}_{[x],[y]}\inn\mathcal{C}(n,\mathbb{M})~|~ x<y\rbrace.
$$
We observe that
$$
\nu(n,\mathbb{M})=\left\lfloor\frac{k}{2}\right\rfloor,\mbox{ if }
\vert\mathcal{C}(n,\mathbb{M})\vert=k.
$$

\begin{remark}
It is important to note that a typical CoP does not need to have a center. In the case of an absence of a center, we say that the circle has a deleted center. However, all CoPs $\mathcal{C}(n)$ with even generators have a center. It is easy to see that the CoP $\mathcal{C}(n)$ contains all points whose weights are positive integers from $1$ to $n-1$ inclusive: 
$$
\mathcal{C}(n)=\lbrace[x]~|~x\in \mathbb{N},x<n\rbrace.
$$
Therefore, CoP $\mathcal{C}(n)$ has $\left \lfloor \frac{n-1}{2}\right \rfloor$ different axes.
\end{remark}

\begin{proposition}\label{unique}
Each axis is uniquely determined by points $[x]\in \mathcal{C}(n,\mathbb{M})$. 
\end{proposition}

\begin{proof}
Let $\mathbb{L}_{[x],[y]}$ be an axis of CoP $\mathcal{C}(n,\mathbb{M})$. Suppose that $\mathbb{L}_{[x],[z]}$ is also an axis with $z\neq y$. We must have $n=x+y=x+z$ and, therefore, $y=z$.
\end{proof}
\bigskip

\subsection{The Density of Points on the Circle of Partition}

In this section, we introduce the notion of the density of points on a CoP $\mathcal{C}(n,\mathbb{M})$ for $\mathbb{M}\subseteq \mathbb{N}$.

\begin{definition}
Let be $\mathbb{H}\subset\mathbb{N}$. We denote the density of $\mathbb{H}$ by 
$$
\mathcal{D}\left(\mathbb{H}\right)=\lim_{n\rightarrow\infty}
\frac{\vert\mathbb{H}\cap \mathbb{N}_n\vert}{n}
$$
if the limit exists.
\end{definition}
\bigskip

\begin{definition}\label{pointdensity}
Let $\mathcal{C}(n,\mathbb{M})$ be CoP with $\mathbb{M}\subset \mathbb{N}$ and $n\in \mathbb{N}$. Suppose $\mathbb{H}\subset \mathbb{M}$. We denote the density of points $[x]\in \mathcal{C}(n,\mathbb{M})$ such that $x\in \mathbb{H}$ by
\begin{align}
\mathcal{D}\left(\mathbb{H}_{\mathcal{C}(\infty,\mathbb{M})}\right)=\lim \limits_{n\longrightarrow \infty}\frac{\#\lbrace\mathbb{L}_{[x],[y]}\inn \mathcal{C}(n,\mathbb{M})~|~\{x,y\}\cap \mathbb{H}\neq \emptyset \rbrace}{ \nu(n,\mathbb{M})}\nonumber
\end{align}
if the limit exists.
\end{definition}
\bigskip

\begin{proposition}\label{inequality}
Let $\mathcal{C}(n)$ with $n\in \mathbb{N}$ be a CoP and $\mathbb{H}\subset \mathbb{N}$. The following inequality holds 
\begin{align}
\mathcal{D}(\mathbb{H})=\lim \limits_{n\longrightarrow \infty}\frac{\left \lfloor \frac{|\mathbb{H}\cap \mathbb{N}_n|}{2}\right \rfloor}{\left \lfloor \frac{n-1}{2}\right \rfloor}\leq \mathcal{D}(\mathbb{H}_{\mathcal{C}(\infty)})\leq \lim \limits_{n\longrightarrow \infty}\frac{|\mathbb{H}\cap \mathbb{N}_n|}{\left \lfloor \frac{n-1}{2}\right \rfloor}=2\mathcal{D}(\mathbb{H}).\nonumber
\end{align}
\end{proposition}

\begin{proof}
The upper bound is obtained from a configuration in which there are no two points $[x],[y]\in \mathcal{C}(n)$ such that $x,y\in \mathbb{H}$ lie on the same axis of the CoP. That is, by the uniqueness of the axes of CoPs with $\nu(n,\mathbb{H})=0$, we can write
\begin{align}
   \# \left \{\mathbb{L}_{[x],[y]}\in \mathcal{C}(n)|~\{x,y\}\cap \mathbb{H}\neq \emptyset \right \}&=\nu(n,\mathbb{H})+\# \left \{\mathbb{L}_{[x],[y]}\in \mathcal{C}(n)|~x\in \mathbb{H},~y\in \mathbb{N}\setminus \mathbb{H}\right \} \nonumber \\&=\# \left \{\mathbb{L}_{[x],[y]}\in \mathcal{C}(n)|~x\in \mathbb{H},~y\in \mathbb{N}\setminus \mathbb{H}\right \} \nonumber \\&=|\mathbb{H}\cap \mathbb{N}_n|.\nonumber
\end{align}
We deduce the lower bound from a configuration in which every two points $[x],[y]\in \mathcal{C}(n)$ with $x,y\in \mathbb{H}$ are joined by an axis of the CoP. That is, by the uniqueness of the axis of CoPs with 
$$
\# \left \{\mathbb{L}_{[x],[y]}\in \mathcal{C}(n)|~x\in \mathbb{H},~y\in \mathbb{N}\setminus \mathbb{H}\right\}=0,
$$ 
we can write 
\begin{align}
    \# \left \{\mathbb{L}_{[x],[y]}\in \mathcal{C}(n)|~\{x,y\}\cap \mathbb{H}\neq \emptyset \right \}&=\nu(n,\mathbb{H})\nonumber \\&=\left \lfloor \frac{|\mathbb{H}\cap \mathbb{N}_n|}{2}\right \rfloor \nonumber
\end{align}
\end{proof}

\begin{theorem}
Let $\mathbb{U}$ denote the set of all Ulam numbers and $\mathcal{D}(\mathbb{U})$ denote the natural density. If $\mathcal{D}(\mathbb{U}_{\mathcal{C}(\infty)})$ exists and 
\begin{align}
\# \lbrace (x,y)|~x\in \mathbb{U},~y\not \in \mathbb{U},~m\leq n,~m\in \mathbb{U},~m=x+y\rbrace \nonumber \\ \leq  \# \lbrace (x,y)|~x\in \mathbb{U},~y\not \in \mathbb{U},~m\leq n,~m\not \in \mathbb{U},~m=x+y\rbrace \nonumber   
\end{align}
with
\begin{align}
\#\lbrace (x,y)|~ \{x,y\} \cap \mathbb{U}\neq \emptyset,~m\leq n,~m\not \in \mathbb{U},~m=x+y\rbrace \ll \frac{n^{1-\epsilon}}{2} \nonumber
\end{align}
for some $\epsilon>0$, then
\begin{align}
  \mathcal{D}(\mathbb{U})=0.\nonumber
\end{align}
\end{theorem}

\begin{proof}
Let $\mathbb{U}\subset \mathbb{N}$ denote the sequence of Ulam numbers. Using Proposition \ref{inequality}, we obtain the lower bound 
\begin{align}
    \lim \limits_{n\longrightarrow \infty}\frac{\# \lbrace\mathbb{L}_{[x],[y]} \inn \mathcal{C}(n)|~ \{x,y\} \cap \mathbb{U}\neq \emptyset \rbrace}{\nu(n,\mathbb{N})}\geq \mathcal{D}(\mathbb{U}).\nonumber
\end{align}
Since $\nu(n,\mathbb{N})=\left \lfloor \frac{n-1}{2}\right \rfloor$, by the uniqueness of the axes of CoPs, we get 
\begin{align}
   \lim \limits_{n\longrightarrow \infty}\frac{\# \lbrace\mathbb{L}_{[x],[y]} \inn \mathcal{C}(n)|~ \{x,y\} \cap \mathbb{U}\neq \emptyset \rbrace}{\nu(n,\mathbb{N})}&=\lim \limits_{n\longrightarrow \infty}\frac{\# \lbrace\mathbb{L}_{[x],[y]} \inn \mathcal{C}(n,\mathbb{U})\rbrace}{\left \lfloor \frac{n-1}{2}\right \rfloor}\nonumber \\&+\lim \limits_{n\longrightarrow \infty}\frac{\# \lbrace\mathbb{L}_{[x],[y]} \inn \mathcal{C}(n)|~x\in \mathbb{U},y\in \mathbb{N}\setminus \mathbb{U}\rbrace}{\left \lfloor \frac{n-1}{2}\right \rfloor}\nonumber
\end{align}
since $\mathcal{D}(\mathbb{U}_{\mathcal{C}(\infty)})$ exists. We can now further write the following decomposition of the generators
\begin{align}
   \lim \limits_{n\longrightarrow \infty}\frac{\# \lbrace\mathbb{L}_{[x],[y]} \inn \mathcal{C}(n,\mathbb{U})\rbrace}{\left \lfloor \frac{n-1}{2}\right \rfloor}&= \lim \limits_{n\longrightarrow \infty}\frac{\# \lbrace\mathbb{L}_{[x],[y]} \inn \mathcal{C}(m,\mathbb{U})|~m\leq n,~m\in \mathbb{U}\rbrace}{\left \lfloor \frac{n-1}{2}\right \rfloor}\nonumber \\&+\lim \limits_{n\longrightarrow \infty}\frac{\# \lbrace\mathbb{L}_{[x],[y]} \inn \mathcal{C}(m,\mathbb{U})|~m\leq n,~m\not \in \mathbb{U}\rbrace}{\left \lfloor \frac{n-1}{2}\right \rfloor}.\label{ulam key}
\end{align}
It follows from the properties of the Ulam sequence the following reduction
\begin{align}
  \lim \limits_{n\longrightarrow \infty}\frac{\# \lbrace\mathbb{L}_{[x],[y]} \inn \mathcal{C}(m,\mathbb{U})|~m\leq n,~m\in \mathbb{U}\rbrace}{\left \lfloor \frac{n-1}{2}\right \rfloor}&=  \lim \limits_{n\longrightarrow \infty}\frac{1}{\left \lfloor \frac{n-1}{2}\right \rfloor}=0\nonumber
\end{align}
since $\# \lbrace\mathbb{L}_{[x],[y]} \inn \mathcal{C}(m,\mathbb{U})|~m\leq n,~m\in \mathbb{U}\rbrace=1$. Similarly, we have the decomposition 
\begin{align}
 \lim \limits_{n\longrightarrow \infty}\frac{\# \lbrace\mathbb{L}_{[x],[y]} \inn \mathcal{C}(n)|~x\in \mathbb{U},y\in \mathbb{N}\setminus \mathbb{U}\rbrace}{\left \lfloor \frac{n-1}{2}\right \rfloor}\nonumber \\= \lim \limits_{n\longrightarrow \infty}\frac{\# \lbrace\mathbb{L}_{[x],[y]} \inn \mathcal{C}(m)|~x\in \mathbb{U},y\in \mathbb{N}\setminus \mathbb{U},~m\leq n,~m\in \mathbb{U}\rbrace}{\left \lfloor \frac{n-1}{2}\right \rfloor}\nonumber \\+ \lim \limits_{n\longrightarrow \infty}\frac{\# \lbrace\mathbb{L}_{[x],[y]} \inn \mathcal{C}(m)|~x\in \mathbb{U},y\in \mathbb{N}\setminus \mathbb{U},m\leq n,~m\not \in \mathbb{U}\rbrace}{\left \lfloor \frac{n-1}{2}\right \rfloor}\nonumber \\\leq 2\lim \limits_{n\longrightarrow \infty}\frac{\# \lbrace\mathbb{L}_{[x],[y]} \inn \mathcal{C}(m)|~x\in \mathbb{U},y\in \mathbb{N}\setminus \mathbb{U},m\leq n,~m\not \in \mathbb{U}\rbrace}{\left \lfloor \frac{n-1}{2}\right \rfloor}\nonumber
\end{align}
by exploiting the condition
\begin{align}
    \# \lbrace\mathbb{L}_{[x],[y]} \inn \mathcal{C}(m)|~x\in \mathbb{U},y\in \mathbb{N}\setminus \mathbb{U},~m\in \mathbb{U},~m\leq n\rbrace \nonumber \\ \leq \# \lbrace\mathbb{L}_{[x],[y]} \inn \mathcal{C}(m)|~x\in \mathbb{U},y\in \mathbb{N}\setminus \mathbb{U},~m\not \in \mathbb{U},~m\leq n\rbrace \nonumber 
\end{align}
so that 
\begin{align}
 \# \lbrace\mathbb{L}_{[x],[y]} \inn \mathcal{C}(n)|~ \{x,y\} \cap \mathbb{U}\neq \emptyset \rbrace&= \# \lbrace\mathbb{L}_{[x],[y]} \inn \mathcal{C}(m,\mathbb{U})|~m\leq n,~m\not \in \mathbb{U}\rbrace \nonumber \\&+2\# \lbrace\mathbb{L}_{[x],[y]} \inn \mathcal{C}(m)|~x\in \mathbb{U},y\in \mathbb{N}\setminus \mathbb{U},m\leq n,~m\not \in \mathbb{U}\rbrace \nonumber 
\end{align}
and 
\begin{align}
    \mathcal{D}(\mathbb{U})&\leq \lim \limits_{m\longrightarrow \infty}\frac{\# \lbrace\mathbb{L}_{[x],[y]} \inn \mathcal{C}(m)|~ \{x,y\} \cap \mathbb{U}\neq \emptyset,~m\leq n,~m\not \in \mathbb{U} \rbrace}{\left \lfloor \frac{n-1}{2}\right \rfloor}\nonumber \\&+\lim \limits_{m\longrightarrow \infty}\frac{\# \lbrace\mathbb{L}_{[x],[y]} \inn \mathcal{C}(m)|~x\in \mathbb{U},~y\not \in \mathbb{U},~m\leq n,~m\not \in \mathbb{U} \rbrace}{\left \lfloor \frac{n-1}{2}\right \rfloor}.\nonumber
\end{align}
By exploiting the condition 
\begin{align}
 \# \lbrace (x,y)|~ \{x,y\} \cap \mathbb{U}\neq \emptyset,~m\leq n,~m\not \in \mathbb{U},~m=x+y,~x<y\rbrace \nonumber \\ \ll \frac{n^{1-\epsilon}}{2} \nonumber
\end{align}
for some $\epsilon>0$, we have 
\begin{align}
 \mathcal{D}(\mathbb{U})&\leq \lim \limits_{m\longrightarrow \infty}\frac{\# \lbrace\mathbb{L}_{[x],[y]} \inn \mathcal{C}(m)|~\{x,y\} \cap \mathbb{U}\neq \emptyset,~m\leq n,~m\not \in \mathbb{U} \rbrace}{\left \lfloor \frac{n-1}{2}\right \rfloor}\nonumber \\&+\lim \limits_{m\longrightarrow \infty}\frac{\# \lbrace\mathbb{L}_{[x],[y]} \inn \mathcal{C}(m)|~x\in \mathbb{U},~y\not \in \mathbb{U},~m\leq n,~m\not \in \mathbb{U} \rbrace}{\left \lfloor \frac{n-1}{2}\right \rfloor}\nonumber \\&\ll \lim \limits_{n\longrightarrow \infty} \frac{1}{n^{\epsilon}}=0.\nonumber  
\end{align}
\end{proof}

\bibliographystyle{amsplain}

\end{document}